\newcommand{\R}{\mathbb{R}}
\newcommand{\N}{\mathbb{N}}
\newcommand{\norm}[1]{\left \lVert#1\right \rVert}
\newtheorem{lemma}{Lemma}
\newtheorem{theorem}[lemma]{Theorem}
\begin{document}
	
	\title{Convergence results for gradient flow and gradient descent systems in the artificial neural network training}
	
	\author{}

	\author{Arzu Ahmadova
		\bigskip
		\\
		\small{Faculty of Mathematics, University of Duisburg-Essen, Essen,
			Germany,\\
			e-mail: \texttt{arzu.ahmadova@uni-due.de}\\
		}
		
		\smallskip
	}
	
	\maketitle
	\begin{abstract}
		The field of artificial neural network (ANN) training has garnered significant attention in recent years, with researchers exploring various mathematical techniques for optimizing the training process. In particular, this paper focuses on advancing the current understanding of gradient flow and gradient descent optimization methods. Our aim is to establish a solid mathematical convergence theory for continuous-time gradient flow equations and gradient descent processes based on mathematical anaylsis tools.
	\end{abstract}
	
	\tableofcontents

	\section{Introduction}
	Artificial neural networks (ANNs) have led to performance improvements in various tasks involving rectified linear unit (ReLU) activation via gradient flow (GF) and gradient descent (GD) schemes.	GF and GD systems are closely related concepts that are often used in optimization and machine learning. GF systems refer to the dynamics of a function evolving over time under the influence of its gradient. These systems can be thought of as a continuous version of gradient descent, where the parameters of the function change continuously rather than in discrete steps. GF systems are used in a variety of applications, including machine learning, physics, and chemistry. On the other hand, GD is an optimization algorithm aimed at minimizing a function. It works by iteratively adjusting the parameters of the function in the direction of the negative gradient, which is the direction of steepest decrease in the function's value. This process continues until the parameters reach a point where the gradient is very close to zero, indicating that a minimum has been found in the training of ANNs with ReLU activation function.
   Hence, GF represents the evolution of a function under its gradient, while GD is an algorithm for minimizing a function.
	
	Standard convergence results for GF and GD systems frequently rely on the convexity of the potential function near an isolated minimum. The convergence of GF and GD processes to the global minimum for convex objective functions has been established in various settings, as demonstrated in studies such as \cite{bach-moulines,jentzen-koerger,nesterov}. For more information on abstract convergence results for GF and GD processes in non-convex settings, please refer to the studies in \cite{BerTsit, GarrigosGower} and the references cited within.
	
	In the analysis of convergence of gradient descent scheme,  Lojasiewicz inequality has played important role. Note that the Lojasiewicz inequality implies that if $F: \R^{n} \to \R$ is a real analytic function, then any bounded solution $x$ of the gradient system converges to a critical point of $F$ as $t$ tends to infinity. 
	For a deeper understanding of the Lojasiewicz convergence theorem, one can refer to references \cite{L-1,L-2}. A revisited theorem of this convergence has been studied by Haraux in his article \cite{haraux2} and in his book \cite{haraux} (Chapter 7), in collaboration with Jendobi.
	
	In recent papers, researchers have been exploring various aspects of the convergence of these algorithms, such as the impact of the step size, the presence of noise, the choice of initialization, and the properties of the loss function. Some papers also compare the performance of the gradient flow and gradient descent systems under different conditions and with different types of ANNs. The main goal of these analyses is to understand how the parameters of the ANN are updated during the training process and how the training error decreases over time. 
	
	Although there are many scientific articles about the convergence analysis of GD, there are relatively fewer articles about the convergence analysis of GF processes in the context of training ANNs. Eberle et al. in \cite{eberletal} showed that the objective functions in the training of ANNs with ReLU activation meet the requirements for an appropriate Lojasiewicz inequality if the target function and the input data's probability distribution are piecewise polynomial. For convergence analyses of GF and GD processes with constant target functions, refer to \cite{ChJeRiRoss}. To understand convergence analysis of GF and GD processes in the training of ANNs with piecewise linear target functions, consult \cite{jentzen-riekert-2}.

	In this article, we first consider the time-continuous gradient system for all $t \in [0,\infty)$, $x\in \mathbb{C}^{1}([0,\infty),\R^{d})$ and $F \in \mathbb{C}^{1}(\R^{d},\R)$ that
	\begin{equation}\label{eq0}
	x^{\prime}(t)=-\left( \nabla F\right) (x(t)).
\end{equation}
	If $x$ is a solution to the gradient system \eqref{eq0}, its time derivative is always equal to the negative gradient $-\nabla F(x)$, which indicates the direction of steepest descent. As a result, it's reasonable to assume that every solution $x$ to \eqref{eq0} has the property that $F$ along the solution $x$ is non-increasing. When $x$ is a solution to \eqref{eq0} and $F$ is continuously differentiable, the composition $F\circ x$ is also non-increasing. If $F\circ x$ is constant, then $x$ itself is constant.

	It is improtant to determine whether $x(t)$ always converges for all $t\in [0,\infty)$. However, in $2$ dimensions, it has been shown that convergence may not occur even for a $\mathbb{C}^{\infty}$ potential $F$ - this was conjectured by Curry \cite{curry} and proven by Palis and de Melo \cite{PalisdeMelo}. The inequality $ \int_{0}^{\infty}|\left( \nabla F\right) (x(s))|\mathrm{d}s < \infty$ is therefore false for general gradient systems. If it were true, it would imply convergence, which has been shown to be false in the general smooth case. A counterexample to this was already exhibited by Curry in 1948, and has later been generalized in \cite[Section~17.1]{attouch-buttazo-michaille} and
	\cite[Section~10.3]{haraux}. As a consequence, the inequality $ \int_{0}^{\infty}\|\left( \nabla F\right) (x(s))\| ds < \infty $ is true when $F$ is analytic in a ball. This has been proven using Lojasiewicz gradient inequality. It is worth noting that in this section, the results are revisited by considering $F$ from $C^1$ rather than the $C^2$ condition established in \cite{haraux2}.
	
	In Section \ref{sec3}, we study convergence analysis of the following GD systems:
	\begin{equation}
		x_{n+1}=x_{n} -\gamma_{n} \left( \nabla F \right) (x_{n}).
	\end{equation}
In a discrete time setting, this paper makes a significant contribution by providing a convergence proof for the GD system. The key result establishes convergence using the inequality $ (F^{\prime}(x)-F^{\prime}(y))(x-y) \leq c|x-y|^{1+\alpha} $, but with weaker assumptions than previous works. Specifically, this section can be considered as a special case of the study conducted by Dereich and Kassing \cite{dereich-kassing}. However, the novel aspect lies in the consideration of a perturbed term, where it is assumed to be zero, even under weaker assumptions compared to the aforementioned paper. By demonstrating the convergence of the GD system under these relaxed conditions, the research presented in this paper expands the understanding of convergence properties in discrete time settings. This finding is of great importance, as it opens up new avenues for applying GD algorithms in various practical scenarios.

	 The motivation behind this article stems from the fact that the convergence of GF and GD processes is not well understood, particularly when it comes to GF processes. To enhance the accuracy and efficiency of the training process, it is crucial to deepen our understanding of the convergence behavior of these processes.
	 
	 This study aims to address this gap in the literature by focusing on the convergence analysis of both GF and GD processes in the training of ANNs. This research provides valuable insights and contributes to the field by improving the understanding of GF and GD processes and their convergence behaviors.
	 
    The remainder of this article is organized as follows. Section \ref{sec2} is devoted to proving the convergence results for GF systems with the help of Theorem \ref{Lem0} and Theorem \ref{Lem1}. In Section \ref{sec3}, we prove the convergence results for GD systems under weeker assumptions stated in Theorem \ref{Lem3} and Theorem \ref{Lem4}.

	\section{Convergence results for gradient flow systems} \label{sec2}
	  The following theorems are the main results of this section and establishes proofs of convergence results for GF scheme. We note that $F\in \mathbb{C}^{1}(\R^{d},\R)$ in this section, because the
	$ \mathbb{C}^{1}$ condition guarantees that the gradient of the function is differentiable.
	\begin{theorem}\label{Lem0}
		Let $d \in \mathbb{N}$, $x\in \mathbb{C}^{1}([0,\infty),\R^{d})$, $F\in \mathbb{C}^{1}(\R^{d},\R)$, and let $\left\| \cdot \right\| \colon \R^{d} \to [0, \infty)$ be a norm, assume $\inf_{t\in [0,\infty)}F(x(t))>-\infty$ and assume for all $t\in [0, \infty)$ that
		\begin{equation}\label{gradient system0}
      	x^{\prime}(t)= -\left( \nabla F\right) (x(t)).
		\end{equation}
		Then 
		\begin{description}
			\item[(i)] it holds that  $\big(F(x(t))\big)_{t\in [0,\infty)}$ converges in $\R$ as $t \to \infty$ and
			\item[(ii)] it holds that 
			\begin{equation}\label{eqF}
				\int_{0}^{\infty}\|\left( \nabla F\right) (x(s))\|^{2}\mathrm{d}s<\infty.
			\end{equation}	
		\end{description}   
	\end{theorem}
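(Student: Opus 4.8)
The plan is to exploit the standard energy dissipation structure of the gradient flow. First I would introduce the auxiliary function $g \colon [0,\infty) \to \R$ given by $g(t) = F(x(t))$. Since $F \in \mathbb{C}^{1}(\R^{d},\R)$ and $x \in \mathbb{C}^{1}([0,\infty),\R^{d})$, the composition $g$ is continuously differentiable, and the chain rule together with the gradient system \eqref{gradient system0} yields, for every $t \in [0,\infty)$,
\[
g'(t) = \langle (\nabla F)(x(t)), x'(t) \rangle = -\langle (\nabla F)(x(t)), (\nabla F)(x(t)) \rangle = -|(\nabla F)(x(t))|^{2},
\]
where $\langle \cdot, \cdot \rangle$ and $|\cdot|$ denote the Euclidean inner product and the Euclidean norm on $\R^{d}$, i.e.\ the inner product with respect to which $\nabla F$ is defined. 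In particular $g'(t) \le 0$ for all $t \in [0,\infty)$.

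For part (i), I would observe that $g$ is non-increasing, since $g' \le 0$, and that it is bounded below by the hypothesis $\inf_{t\in[0,\infty)} F(x(t)) > -\infty$. A non-increasing real-valued function that is bounded below converges as its argument tends to infinity, so $g(t) = F(x(t))$ converges in $\R$ as $t \to \infty$. This settles (i).

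For part (ii), I would integrate the dissipation identity. By the fundamental theorem of calculus, for every $T \in [0,\infty)$,
\[
\int_{0}^{T} |(\nabla F)(x(s))|^{2}\,\mathrm{d}s = -\int_{0}^{T} g'(s)\,\mathrm{d}s = g(0) - g(T) = F(x(0)) - F(x(T)) \le F(x(0)) - \inf_{t\in[0,\infty)} F(x(t)).
\]
The right-hand side is a finite constant independent of $T$, so the integral, which is monotone non-decreasing in $T$, is uniformly bounded above; letting $T \to \infty$ therefore gives $\int_{0}^{\infty} |(\nabla F)(x(s))|^{2}\,\mathrm{d}s < \infty$ in the Euclidean norm.

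Finally, to obtain the conclusion for the arbitrary norm $\|\cdot\|$ appearing in the statement, I would invoke the equivalence of all norms on the finite-dimensional space $\R^{d}$: there exists a constant $c > 0$ with $\|v\| \le c\,|v|$ for every $v \in \R^{d}$, whence $\int_{0}^{\infty} \|(\nabla F)(x(s))\|^{2}\,\mathrm{d}s \le c^{2}\int_{0}^{\infty} |(\nabla F)(x(s))|^{2}\,\mathrm{d}s < \infty$. I do not anticipate a genuine obstacle here, as the argument is a textbook energy estimate; the only points requiring care are the regularity bookkeeping (verifying that $g$ is $\mathbb{C}^{1}$ so that the fundamental theorem of calculus applies under the sole $\mathbb{C}^{1}$ assumption on $F$) and the distinction between the Euclidean inner product that defines $\nabla F$ and the general norm $\|\cdot\|$ in which conclusion (ii) is phrased, which is absorbed by norm equivalence.
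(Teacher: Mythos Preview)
Your argument is correct and follows the same energy-dissipation approach as the paper: compute $\tfrac{\mathrm{d}}{\mathrm{d}t}F(x(t))$ via the chain rule, use monotonicity plus the lower bound for (i), and integrate for (ii). Your treatment is in fact slightly more careful than the paper's, since you distinguish the Euclidean norm arising from the inner product that defines $\nabla F$ from the arbitrary norm $\|\cdot\|$ in the statement and close the gap by norm equivalence, whereas the paper writes $F'(x(t))x'(t)=-\|(\nabla F)(x(t))\|^{2}$ directly in the given norm without comment.
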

	\begin{proof} 
		(i) Note that equation \eqref{gradient system0} and the chain rule  assure for all $t \in [0,\infty)$ that
		\begin{equation}\label{chain rule}
			\frac{\mathrm{d}}{\mathrm{d}t}\Big((F\circ x)(t)\Big)=F^{\prime}( x(t)) x^{\prime}(t) = -\norm{\left( \nabla F\right) (x(t))}^2 \leq 0. 
		\end{equation}
		This and $\inf_{t\in [0,\infty)}F(x(t))>-\infty$ imply (i).\\\\
		(ii) Integrating \eqref{chain rule} ensures for all $t\in [0,\infty)$ that
		\begin{equation}\label{integral}
			\int_{0}^{t}\|\left( \nabla F\right) (x(s))\|^{2}\mathrm{d}s=F(x(0))-F(x(t)).
		\end{equation}
		This and $\inf_{t\in [0,\infty)}F(x(t))>-\infty$ imply (ii).
	\end{proof}	
	\begin{theorem}\label{Lem1}
		Let $d \in \mathbb{N}$,  $F\in \mathbb{C}^{1}(\R^{d},\R)$, let $x\in \mathbb{C}^{1}([0,\infty),\R^{d})$ be bounded, let $\left\| \cdot \right\|\colon \R^{d} \to [0, \infty)$ be a norm, and assume for all $t\in [0, \infty)$ that
		\begin{equation}\label{gradient system}
			x^{\prime}(t)= -\left( \nabla F\right) (x(t)).
		\end{equation}
		Then 
		\begin{description}
			\item[(i)] it holds that  $\big(F(x(t))\big)_{t\in [0,\infty)}$ converges in $\R$ as $t \to \infty$ and
			\item[(ii)] it holds that \begin{equation}\label{limsup}
				\limsup_{t\to \infty}\|\left( \nabla F\right) (x(t))\|=0.
			\end{equation}
		\end{description}
	\end{theorem}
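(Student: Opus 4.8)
The plan is to establish part (i) first and then use it as a stepping stone to part (ii). For part (i), I would observe that the boundedness of $x$ is the new ingredient compared to Theorem \ref{Lem0}: since $x([0,\infty))$ is bounded, its closure is a compact subset of $\R^{d}$, and $F$ being continuous attains a minimum on this compact set. Therefore $\inf_{t\in[0,\infty)}F(x(t))>-\infty$ holds automatically, and part (i) follows immediately by invoking Theorem \ref{Lem0}(i). The same boundedness-plus-continuity argument also recovers conclusion \eqref{eqF} of Theorem \ref{Lem0}, namely $\int_{0}^{\infty}\|(\nabla F)(x(s))\|^{2}\,\mathrm{d}s<\infty$, which will be the main analytic tool for part (ii).

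For part (ii), the goal is to upgrade the square-integrability of $t\mapsto\|(\nabla F)(x(t))\|$ into the stronger statement that this quantity tends to $0$ along the whole trajectory (not merely along a subsequence). The natural route is a contradiction argument. Suppose $\limsup_{t\to\infty}\|(\nabla F)(x(t))\|=2\varepsilon>0$. Then there is a sequence of times $t_{n}\to\infty$ with $\|(\nabla F)(x(t_{n}))\|\ge\varepsilon$. The key obstacle is that mere square-integrability of a function does \emph{not} force it to $0$ (consider thin tall spikes), so I must exploit additional regularity to control how fast $\|(\nabla F)(x(t))\|$ can change in time. Here the gradient-flow structure helps: since $x$ is bounded and $\nabla F$ is continuous, $\nabla F$ is bounded on the compact closure of the trajectory, hence $x'(t)=-(\nabla F)(x(t))$ is uniformly bounded, so $x$ is Lipschitz. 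Moreover $\nabla F$ is uniformly continuous on that compact set, so the composition $t\mapsto(\nabla F)(x(t))$ is uniformly continuous on $[0,\infty)$.

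The payoff of uniform continuity is that the spikes cannot be arbitrarily thin: there exists $\delta>0$ so that whenever $\|(\nabla F)(x(t_{n}))\|\ge\varepsilon$, the value $\|(\nabla F)(x(s))\|$ stays at least $\varepsilon/2$ on an interval $[t_{n},t_{n}+\delta]$ of fixed length $\delta$. Passing to a subsequence so these intervals are pairwise disjoint, each contributes at least $(\varepsilon/2)^{2}\delta$ to the integral $\int_{0}^{\infty}\|(\nabla F)(x(s))\|^{2}\,\mathrm{d}s$, and summing over infinitely many such intervals forces the integral to diverge. This contradicts the finiteness established via Theorem \ref{Lem0}(ii), and hence $\limsup_{t\to\infty}\|(\nabla F)(x(t))\|=0$.

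The main obstacle, as indicated above, is precisely the step of ruling out thin spikes: one must verify that the uniform continuity of $t\mapsto(\nabla F)(x(t))$ genuinely yields intervals of a \emph{uniform} length $\delta$ on which the gradient norm stays bounded below, independently of $n$. This uniformity is what converts the pointwise lower bounds at the times $t_{n}$ into a nonzero lower bound on the integral, and it rests on the compactness coming from boundedness of $x$ together with continuity of $\nabla F$. Once this is in place, the contradiction is routine.
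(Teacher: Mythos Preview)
Your proposal is correct and follows essentially the same route as the paper: both reduce (i) and the finite-energy estimate to Theorem~\ref{Lem0} via boundedness of $x$ and continuity of $F$, and both derive (ii) by establishing uniform continuity of $t\mapsto(\nabla F)(x(t))$ (through compactness of $\overline{x([0,\infty))}$, boundedness of $x'$, and uniform continuity of $\nabla F$ on that compact set) and then combining this with $\int_{0}^{\infty}\|(\nabla F)(x(s))\|^{2}\,\mathrm{d}s<\infty$. The only cosmetic difference is that the paper invokes an external Barbalat-type lemma (\cite[Theorem~2.1.2]{haraux}) for the implication ``uniformly continuous and integrable $\Rightarrow$ tends to $0$'', whereas you spell out that lemma's proof directly via your thin-spike contradiction argument.
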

	\begin{proof} Continuity of $F$ and boundedness of $x$ imply that $F \circ x$ is bounded. This and Theorem \ref{Lem0} imply (i) and
		\begin{equation}\label{eq4}
			\int_{0}^{\infty}\|\left( \nabla
			F\circ x\right) (s)\|^2 ds < \infty.
		\end{equation}
		The restriction of the continuous function $\nabla F$ to the compact set ${\overline{x([0,\infty))}}$ is bounded and uniformly continuous. This and equation \eqref{gradient system} imply that $x^{\prime}$ is bounded. The fact that $x$ has a bounded derivative guarantees that $x$ is uniformly continuous. Since the function $\nabla F$ is continuous and  $x$ is bounded and uniformly continuous, this yields that $\nabla F\circ x$ is uniformly continuous. This and the fact that the restriction of the continuous function $\left\| \cdot \right\|^{2}$ to the compact set $\overline{(\nabla F\circ x)([0,\infty))}$ is uniformly continuous, prove that $\|\nabla F\circ x\|^{2}$ is uniformly  continuous on $[0, \infty)$. Combining \cite[Theorem~ 2.1.2]{haraux}
		 together with uniform continuity of $\|(\nabla F)\circ x\|^{2}$ and \eqref{eq4} imply that 
		\begin{equation}
			\limsup_{t\to \infty}\|\left( \nabla F\right) (x(t))\|=0.
		\end{equation}
		This proves (ii) and completes the proof of Theorem \ref{Lem1}. 
	\end{proof}

	\section{Convergence results for gradient descent systems} \label{sec3}
	The following theorems are the main results in this section. We note that $\gamma_n$ is used as the step size in the $n$-th iteration of the gradient descent algorithm, which is defined by the equation $x_{n+1}=x_{n} -\gamma_{n}\left( \nabla F\right) (x_{n})$. The assumption $\overline{\lim}_{n\to \infty}\gamma_{n}=0$ ensures that the step size goes to zero as the algorithm progresses, which is necessary for the algorithm to converge to a minimum of the function. The assumption $\sum_{n=0}^{\infty}\mathds{1}_{(0,1)}(\alpha)\gamma_{n}^{\frac{1+\alpha}{1-\alpha}}<\infty$ ensures that the step size decays fast enough for the algorithm to converge. For consistency, $F$ is also selected from $\mathbb{C}^{1}(\R^{d},\R)$ in this section.
\begin{theorem}\label{Lem3}
	Let $d\in \N$, $c\in(0,\infty)$, $\alpha\in (0,1]$, $F\in \mathbb{C}^{1}(\R^{d},\R)$, let $\left\| \cdot \right\|\colon \R^{d} \to [0, \infty)$ be a norm, let $\gamma\colon \mathbb{N}_{0}\to (0,\infty)$ satisfy that $\overline{\lim}_{n\to \infty}\gamma_{n}=0$, let $C\subseteq \R^{d} $ be convex, assume for all $x,y\in C$ that
	\begin{equation}\label{eq5.1}
		(F^{\prime}(x)-F^{\prime}(y))(x-y) \leq c\|x-y\|^{1+\alpha},
	\end{equation}
	assume that $\sum_{n=0}^{\infty}\mathds{1}_{(0,1)}(\alpha)\gamma_{n}^{\frac{1+\alpha}{1-\alpha}}<\infty$, let $x \colon \N_{0}\to C$, assume $\inf_{n\in \N_{0}}F(x_{n})>-\infty$, and assume for all $n\in \N_{0}$ that
	\begin{equation}\label{eq6.1} 
		x_{n+1}=x_{n} -\gamma_{n}\nabla F(x_{n}).
	\end{equation}
	Then the following statements hold:
	\begin{description}
			\item[(i)] $(F(x_{n}))_{n\in \mathbb{N}_{0}}$ converges in $\R$ and
		\item[(ii)] $\sum_{n=0}^{\infty}\gamma_{n}\|\nabla F(x_{n})\|^{2}<\infty$.
	 
	\end{description}
\end{theorem}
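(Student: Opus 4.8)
The plan is to derive a single per-step descent inequality and then sum it, splitting into the cases $\alpha=1$ and $\alpha\in(0,1)$ only at the final estimation stage. First I would fix $n\in\N_0$, set $\Delta_n=x_{n+1}-x_n=-\gamma_n\nabla F(x_n)$, and use the convexity of $C$ together with $F\in\mathbb{C}^1(\R^d,\R)$ to apply the fundamental theorem of calculus along the segment $[x_n,x_{n+1}]\subseteq C$:
\[
F(x_{n+1})-F(x_n)=\int_0^1 F'(x_n+t\Delta_n)(\Delta_n)\,\mathrm{d}t.
\]
Adding and subtracting $F'(x_n)(\Delta_n)=-\gamma_n\norm{\nabla F(x_n)}^2$ and estimating the remaining integrand via \eqref{eq5.1} (applied with $x=x_n+t\Delta_n$ and $y=x_n$, then dividing the resulting bound by $t$ and integrating $\int_0^1 t^\alpha\,\mathrm{d}t$) yields the key descent estimate
\[
F(x_{n+1})-F(x_n)\le -\gamma_n\norm{\nabla F(x_n)}^2+\tfrac{c}{1+\alpha}\,\gamma_n^{1+\alpha}\norm{\nabla F(x_n)}^{1+\alpha}.
\]

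In the case $\alpha=1$ the positive term is $\tfrac{c}{2}\gamma_n^2\norm{\nabla F(x_n)}^2$, and since $\overline{\lim}_{n\to\infty}\gamma_n=0$ there is an $N$ with $\tfrac{c}{2}\gamma_n\le\tfrac12$ for all $n\ge N$; hence $F(x_{n+1})-F(x_n)\le-\tfrac12\gamma_n\norm{\nabla F(x_n)}^2\le0$ for $n\ge N$, so $(F(x_n))_n$ is eventually nonincreasing and bounded below, giving (i), while telescoping from $N$ gives (ii). For $\alpha\in(0,1)$ the exponent mismatch ($2$ versus $1+\alpha$) has to be resolved by a weighted Young inequality: writing $\gamma_n^{1+\alpha}\norm{\nabla F(x_n)}^{1+\alpha}=\gamma_n^{(1+\alpha)/2}\,(\gamma_n\norm{\nabla F(x_n)}^2)^{(1+\alpha)/2}$ and applying Young with conjugate exponents $p=\tfrac{2}{1-\alpha}$, $q=\tfrac{2}{1+\alpha}$ and a suitable weight $\delta>0$ absorbs half of the good term,
\[
\tfrac{c}{1+\alpha}\gamma_n^{1+\alpha}\norm{\nabla F(x_n)}^{1+\alpha}\le \tfrac12\gamma_n\norm{\nabla F(x_n)}^2+K\,\gamma_n^{\frac{1+\alpha}{1-\alpha}},
\]
where $K=K(c,\alpha,\delta)$ and $\delta$ is fixed so that the coefficient of $\gamma_n\norm{\nabla F(x_n)}^2$ is exactly $\tfrac12$. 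The conjugate-exponent computation is precisely what produces the exponent $\tfrac{1+\alpha}{1-\alpha}$ appearing in the summability hypothesis, which serves as the natural consistency check of the argument.

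Substituting back gives $F(x_{n+1})-F(x_n)\le-\tfrac12\gamma_n\norm{\nabla F(x_n)}^2+K\gamma_n^{(1+\alpha)/(1-\alpha)}$. Summing from $0$ to $N-1$, rearranging, bounding $F(x_N)$ below by $\inf_m F(x_m)$, and bounding the remaining series by the assumption $\sum_n\gamma_n^{(1+\alpha)/(1-\alpha)}<\infty$ yields $\sum_n\gamma_n\norm{\nabla F(x_n)}^2<\infty$, which is (ii). For (i) I would introduce the auxiliary sequence $v_n=F(x_n)+K\sum_{m\ge n}\gamma_m^{(1+\alpha)/(1-\alpha)}$; the tail $S_n=\sum_{m\ge n}\gamma_m^{(1+\alpha)/(1-\alpha)}$ is finite with $S_n\to0$ and $S_n-S_{n+1}=\gamma_n^{(1+\alpha)/(1-\alpha)}$, so the descent estimate collapses to $v_{n+1}\le v_n-\tfrac12\gamma_n\norm{\nabla F(x_n)}^2\le v_n$. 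Thus $v_n$ is nonincreasing and bounded below, hence convergent, and since $S_n\to0$ the sequence $F(x_n)=v_n-KS_n$ converges as well.

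The hard part will be the $\alpha\in(0,1)$ estimate: choosing the Young weight $\delta$ so that exactly one half of $\gamma_n\norm{\nabla F(x_n)}^2$ is absorbed and verifying that the leftover exponent is precisely $\tfrac{1+\alpha}{1-\alpha}$, together with the bookkeeping in the $v_n$ argument that converts summability of the two series into convergence of $(F(x_n))_n$. The $\alpha=1$ case is by comparison routine, requiring only that $\gamma_n\to0$ to eventually dominate the quadratic remainder.
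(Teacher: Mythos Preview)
Your argument is correct, and up to the key descent inequality $F(x_{n+1})-F(x_n)\le -\gamma_n\norm{\nabla F(x_n)}^2+\tfrac{c}{1+\alpha}\gamma_n^{1+\alpha}\norm{\nabla F(x_n)}^{1+\alpha}$ and the handling of the case $\alpha=1$ it coincides with the paper's proof. The difference lies in the case $\alpha\in(0,1)$: the paper sums the descent estimate first and applies H\"older's inequality to the summed quantity $\sum_k\gamma_k^{(1+\alpha)/2}\big(\gamma_k\norm{\nabla F(x_k)}^2\big)^{(1+\alpha)/2}$, then reaches (ii) by contradiction and (i) by showing $\sum_n|F(x_{n+1})-F(x_n)|<\infty$ via H\"older again. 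You instead apply Young's inequality termwise (which is the dual of the paper's H\"older step) to absorb half of the good term and produce a summable remainder $K\gamma_n^{(1+\alpha)/(1-\alpha)}$; this gives (ii) by direct telescoping and (i) by the clean Lyapunov-type monotone sequence $v_n=F(x_n)+K\sum_{m\ge n}\gamma_m^{(1+\alpha)/(1-\alpha)}$. Your route is slightly more streamlined---it avoids the contradiction argument and the somewhat delicate step of bounding $|F(x_{n+1})-F(x_n)|$ by an expression that was only derived as an upper bound---at the cost of having to track the Young weight $\delta$ explicitly. Both arguments exploit exactly the same summability hypothesis $\sum_n\gamma_n^{(1+\alpha)/(1-\alpha)}<\infty$, which is why the conjugate-exponent check you mention works out.
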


\begin{proof}
	Note that the fundamental theorem of calculus yields for all $n\in \mathbb{N}_{0}$ that
	\begin{align}\label{FTC}
		&F(x_{n+1})-F(x_{n})\nonumber\\
		&=\int_{0}^{1}F^{\prime}(x_{n}+\lambda(x_{n+1}-x_{n}))(x_{n+1}-x_{n})\mathrm{d}\lambda \nonumber\\
		&=\int_{0}^{1}F^{\prime}(x_{n})(x_{n+1}-x_{n})\mathrm{d}\lambda\\
		&+\int_{0}^{1}\Big( F^{\prime}(x_{n}+\lambda(x_{n+1}-x_{n}))- F^{\prime}(x_{n})\Big)(x_{n}+\lambda(x_{n+1}-x_{n})-x_{n})\frac{1}{\lambda}\mathrm{d}\lambda.\nonumber
	\end{align}
Hence, equation \eqref{eq6.1} yields for all $n\in \N_{0}$ that
\begin{align}
	&F(x_{n+1})-F(x_{n})= -\gamma_{n}\|\nabla F(x_{n})\|^{2}\\	&+\int_{0}^{1}\Big( F^{\prime}(x_{n}+\lambda(x_{n+1}-x_{n}))- F^{\prime}(x_{n})\Big)(x_{n}+\lambda(x_{n+1}-x_{n})-x_{n})\frac{1}{\lambda}\mathrm{d}\lambda \nonumber.
\end{align} 
This, inequality \eqref{eq5.1}, and equation \eqref{eq6.1}  imply for all $n\in \N_{0}$ that
\begin{align}\label{**}
&F(x_{n+1})-F(x_{n})\nonumber\\	&\leq -\gamma_{n}\|\nabla F(x_{n})\|^{2}+c\int_{0}^{1}\lambda^{1+\alpha}\|x_{n+1}-x_{n}\|^{1+\alpha}\frac{1}{\lambda}\mathrm{d}\lambda\\&= -\gamma_{n}\|\nabla F(x_{n})\|^{2}+\frac{c}{1+\alpha}\gamma_{n}^{1+\alpha}\|\nabla F(x_{n})\|^{1+\alpha}\nonumber.
\end{align}
\textbf{Step 1:} Throughout Step 1 we assume $\alpha=1$. Then \eqref{**} implies for all $n\in \N_{0}$  that
	\begin{equation}\label{ineq12}
		F(x_{n+1})-F(x_{n})\leq -\|\nabla F(x_{n})\|^{2}\Big(\gamma_{n}-\frac{c}{2}\gamma_{n}^{2}\Big).
	\end{equation}
	This and $\overline{\lim}_{n\to \infty}\gamma_{n}=0$ ensure that $(F(x_{n}))_{n\in \N_{0}}$ is eventually monotonically non-increasing. This and $\inf_{n\in \N_{0}}F(x_{n})>-\infty$ imply that $\lim_{n\to \infty}F(x_{n})$ exists in $\R$. This proves (i) in the case $\alpha=1$.\\
	Summing over \eqref{ineq12} gives for all $n\in \N_{0}$ that	
	\begin{equation}\label{ineq13}
		F(x_{0})-F(x_{n+1})=-	\sum_{k=0}^{n}(F(x_{k+1})-F(x_{k}))\geq \sum_{k=0}^{n}\|\nabla F(x_{k})\|^{2}(\gamma_{k}-\frac{c}{2}\gamma_{k}^{2}).
	\end{equation}
	This, $\inf_{n\in \N_{0}}F(x_{n})>-\infty$ and $\overline{\lim}_{n\to \infty}\gamma_{n}=0$ imply that
	\begin{equation}
		\sum_{k=0}^{\infty}\gamma_{k}\|\nabla F(x_{k})\|^{2}<\infty.
	\end{equation}
	This proves (ii) in the case $\alpha=1$.\\
	\textbf{Step 2}: Throughout Step 2 we assume $\alpha<1$. Summing over \eqref{**} shows for all $n\in \N_{0}$ that
	\begin{equation}\label{ineq10}
		F(x_{n+1})-F(x_{0})\leq -\sum_{k=0}^{n}\gamma_{k}\|\nabla F(x_{k})\|^{2}+\frac{c}{1+\alpha}\sum_{k=0}^{n}\gamma_{k}^{\frac{1+\alpha}{2}}\|\nabla F(x_{k})\|^{1+\alpha}\gamma_{k}^{\frac{1+\alpha}{2}}.
	\end{equation}
	This and H\"{o}lder's inequality give for all $n \in \N_{0}$ that
	\begin{align}\label{ineq11}
		F(x_{n+1})-F(x_{0})&\leq -\sum_{k=0}^{n}\gamma_{k}\|\nabla F(x_{k})\|^{2}\\
		&+c\Big(\sum_{k=0}^{n}\gamma_{k}^{\frac{1+\alpha}{2}\frac{2}{1-\alpha}}\Big)^{\frac{1-\alpha}{2}}\Big(\sum_{k=0}^{n}\gamma_{k}\|\nabla F(x_{k})\|^{2}\Big)^{\frac{1+\alpha}{2}}\nonumber.
	\end{align}
	Aiming at a contradiction assume that $\sum_{k=0}^{\infty}\gamma_{k}\|\nabla F(x_{k})\|^{2}=\infty$. Then \eqref{ineq11},\\
	$\inf_{n\in \N_{0}}F(x_{n})>-\infty$ and $\sum_{k=0}^{\infty}\gamma_{k}^{\frac{1+\alpha}{1-\alpha}}<\infty$ ensure that
	\begin{align}
		\infty&=\Big(\sum_{k=0}^{\infty}\gamma_{k}\|\nabla F(x_{k})\|^{2}\Big)^{\frac{1+\alpha}{2}}\Big[\Big(\sum_{k=0}^{\infty}\gamma_{k}\|\nabla F(x_{k})\|^{2}\Big)^{\frac{1-\alpha}{2}}-c\Big(\sum_{k=0}^{\infty}\gamma_{k}^{\frac{1+\alpha}{1-\alpha}}\Big)^{\frac{1-\alpha}{2}}\Big]\nonumber\\
		&\leq F(x_{0})-\inf_{n\in \N}F(x_{n})<\infty.
	\end{align}
	This is a contradiction. This proves (ii) in the case $\alpha<1$. \\
		Next we prove (i). Summing over \eqref{**}, H\"{o}lder's inequality, and (ii) yield that 
	\begin{align}
		\sum_{n=0}^{\infty}\|F(x_{n+1})-F(x_{n})\|
		&\leq \sum_{n=0}^{\infty}\gamma_{n}\|\nabla F(x_{n})\|^{2}\nonumber\\
		&+\frac{c}{1+\alpha}\sum_{n=0}^{\infty}\gamma_{n}^{1+\alpha}\|\nabla
		F(x_{n})\|^{1+\alpha}\\
		&\leq \sum_{n=0}^{\infty}\gamma_{n}\|\nabla F(x_{n})\|^{2}\nonumber\\
		&+\frac{c}{1+\alpha}\Big(\sum_{n=0}^{\infty}\gamma_{n}^{\frac{1+\alpha}{2}\frac{2}{1-\alpha}}\Big)^{\frac{1-\alpha}{2}}\Big(\sum_{n=0}^{\infty}\gamma_{n}\|\nabla F(x_{n})\|^{2}\Big)^{\frac{1+\alpha}{2}}<\infty\nonumber.
	\end{align}
	This implies that $\big(F(x_{n})\big)_{n\in \N_{0}}$ is a Cauchy sequence, and hence, convergent in $\R$. This proves (i) in the case $\alpha<1$. The proof of Theorem \ref{Lem3} is thus completed.
\end{proof}

\begin{theorem}\label{Lem4}
	Let $d\in \N$, $c\in(0,\infty)$, $\alpha\in (0,1]$, $F\in \mathbb{C}^{1}(\R^{d},\R)$, let $\left\| \cdot \right\|\colon \R^{d} \to [0, \infty)$ be a norm, let $C\subseteq \R^{d}$ be a bounded and convex set, and let $\gamma\colon \mathbb{N}_{0}\to (0,\infty)$ satisfy that $\overline{\lim}_{n\to \infty}\gamma_{n}=0$, assume for all $x,y\in C$ that
	\begin{equation}\label{ineq5}
		(F^{\prime}(x)-F^{\prime}(y))(x-y) \leq c\|x-y\|^{1+\alpha},
	\end{equation}
	assume that $\sum_{n=0}^{\infty}\mathds{1}_{(0,1)}(\alpha)\gamma_{n}^{\frac{1+\alpha}{1-\alpha}}<\infty$, let $x\colon \N_{0}\to C$, and assume for all $n\in \N_{0}$ that
	\begin{equation}\label{eq6} 
		x_{n+1}=x_{n} -\gamma_{n}\nabla F(x_{n}).
	\end{equation}
	Then the following statements hold:
	\begin{description}
		\item[(i)] $(F(x_{n}))_{n\in \mathbb{N}_{0}}$ converges in $\R$,
		\item[(ii)] $\sum_{n=0}^{\infty}\gamma_{n}\|\nabla F(x_{n})\|^{2}<\infty$, and
		\item[(iii)] if  $\sum_{n=0}^{\infty}\gamma_{n}=\infty$,  then 
		\begin{equation}\label{lim}
			\overline{\lim}_{n\to \infty}\|\nabla F(x_{n})\|=0.
		\end{equation}
	\end{description}
\end{theorem}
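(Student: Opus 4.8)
The plan is to read off parts (i) and (ii) almost for free from Theorem~\ref{Lem3} and to concentrate the real work on part (iii). Since $C$ is bounded, its closure $\overline{C}$ is compact, and continuity of $F$ forces $\inf_{n\in\N_0}F(x_n)\ge\min_{\overline{C}}F>-\infty$; thus every hypothesis of Theorem~\ref{Lem3} is met and (i), (ii) follow at once. Along the way I would record three facts for later use: $\sum_{n=0}^{\infty}\gamma_n\norm{\nabla F(x_n)}^2<\infty$ (this is (ii)); $\lim_{n\to\infty}\gamma_n=0$ (from $\overline{\lim}_{n\to\infty}\gamma_n=0$ and $\gamma_n>0$); and that $\nabla F$, being continuous on the compact set $\overline{C}$, is bounded and uniformly continuous there, so that $M:=\sup_{\overline{C}}\norm{\nabla F}<\infty$.

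For (iii) I abbreviate $g_n:=\norm{\nabla F(x_n)}$. The first step is to note $\liminf_{n\to\infty}g_n=0$: if $g_n\ge c>0$ for all large $n$, then $\sum_n\gamma_n g_n^2\ge c^2\sum_{n\ge N}\gamma_n=\infty$, contradicting (ii) via $\sum_n\gamma_n=\infty$. The substance of the statement is to upgrade this to $\limsup_{n\to\infty}g_n=0$, i.e.\ to exclude persistent oscillations. I would argue by contradiction, assuming $\limsup_{n\to\infty}g_n=L>0$ and setting $\varepsilon:=L/3$. Because $\liminf g_n=0<\varepsilon<2\varepsilon<L=\limsup g_n$, the sequence drops below $\varepsilon$ and rises above $2\varepsilon$ infinitely often, so there are infinitely many disjoint up-crossing pairs $m_i<n_i$ with $g_{m_i}\le\varepsilon$, $g_{n_i}\ge 2\varepsilon$, and $\varepsilon<g_k<2\varepsilon$ for $m_i<k<n_i$.

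The crux is a geometric lower bound on the cost of each crossing, chained as follows. By the reverse triangle inequality $\norm{\nabla F(x_{n_i})-\nabla F(x_{m_i})}\ge g_{n_i}-g_{m_i}\ge\varepsilon$, so uniform continuity of $\nabla F$ on $\overline{C}$ supplies a fixed $\delta>0$ (depending only on $\varepsilon$) with $\norm{x_{n_i}-x_{m_i}}\ge\delta$. On the other hand, $x_{n_i}-x_{m_i}=-\sum_{k=m_i}^{n_i-1}\gamma_k\nabla F(x_k)$ with $g_k<2\varepsilon$ for $m_i\le k\le n_i-1$, so $\delta\le\norm{x_{n_i}-x_{m_i}}\le 2\varepsilon\sum_{k=m_i}^{n_i-1}\gamma_k$, giving $\sum_{k=m_i}^{n_i-1}\gamma_k\ge\delta/(2\varepsilon)$. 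Since $\gamma_n\to 0$, for all large $i$ we have $\gamma_{m_i}\le\delta/(4\varepsilon)$, hence $\sum_{k=m_i+1}^{n_i}\gamma_k\ge\sum_{k=m_i}^{n_i-1}\gamma_k-\gamma_{m_i}\ge\delta/(4\varepsilon)$; as $g_k>\varepsilon$ for $m_i<k\le n_i$, the cost over the crossing is $\sum_{k=m_i+1}^{n_i}\gamma_k g_k^2\ge\varepsilon^2\cdot\delta/(4\varepsilon)=\varepsilon\delta/4>0$. Summing over the infinitely many disjoint crossings forces $\sum_n\gamma_n g_n^2=\infty$, contradicting (ii), and (iii) follows.

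The routine parts are the reductions from Theorem~\ref{Lem3} and the $\liminf$ argument; the main obstacle is exactly the crossing estimate, where one must convert the soft information ``the step sizes vanish and $\nabla F$ is uniformly continuous'' into a quantitative lower bound on the ``time'' $\sum_k\gamma_k$ the iterates need to change the gradient's size by a fixed amount. The delicate bookkeeping lies in choosing the up-crossing windows so that $g_k$ is simultaneously bounded above (to lower-bound $\sum_k\gamma_k$ through the distance travelled) and bounded below (to lower-bound the cost), and in absorbing the endpoint term $\gamma_{m_i}$ using $\gamma_n\to 0$. This is the discrete counterpart of the Barbalat-type argument (uniform continuity together with integrability) underlying Theorem~\ref{Lem1}; notably the one-sided Hölder bound \eqref{ineq5} plays no further role here, the needed regularity of $\nabla F$ coming solely from $F\in\mathbb{C}^1$ and compactness of $\overline{C}$.
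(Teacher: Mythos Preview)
Your proof is correct. Parts (i) and (ii) are handled exactly as in the paper: boundedness of $C$ together with continuity of $F$ yields $\inf_{n}F(x_n)>-\infty$, and Theorem~\ref{Lem3} applies.

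For part (iii) your route differs from the paper's. The paper does not first isolate $\liminf_{n\to\infty}g_n=0$; it works directly with a subsequence $(n_k)$ on which $g_{n_k}\ge\varepsilon$, introduces the global bound $\kappa:=\sup_n g_n<\infty$, and uses uniform continuity in the \emph{forward} direction (a modulus $\delta$ such that $\|x-y\|\le\delta\Rightarrow\|\nabla F(x)-\nabla F(y)\|\le\varepsilon/2$). The divergence $\sum_n\gamma_n=\infty$ is then used to build, for each $k$, a block $\{n_k,\dots,m_k+1\}$ with $\sum_{l=n_k}^{m_k}\gamma_l\le\delta/\kappa<\sum_{l=n_k}^{m_k+1}\gamma_l$; the $\kappa$-bound keeps the iterates within distance $\delta$ of $x_{n_k}$ throughout the block, so $g_l\ge\varepsilon/2$ there, and each block contributes at least $(\varepsilon/2)^2\,\delta/\kappa$ to $\sum_l\gamma_l g_l^2$. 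Your argument instead establishes $\liminf g_n=0$ first and then runs an up-crossing scheme, using uniform continuity in the \emph{reverse} direction (a gradient change of size $\ge\varepsilon$ forces $\|x_{n_i}-x_{m_i}\|\ge\delta$) together with the in-window bound $g_k<2\varepsilon$ to lower-bound $\sum_k\gamma_k$ over the window; the hypothesis $\sum_n\gamma_n=\infty$ enters only through the $\liminf$ step. The paper's version is shorter and avoids the endpoint bookkeeping with $\gamma_{m_i}$, at the price of invoking the global bound $\kappa$; your version is the discrete Barbalat/Bertsekas--Tsitsiklis style crossing argument and makes transparent that, once $\liminf g_n=0$ is known, the exclusion of oscillations needs only (ii) and uniform continuity, not the divergence of the step sizes.
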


\begin{proof}
	Continuity of $F$ and boundedness of $x$ ensure that $(F(x_{n}))_{n\in \N_{0}}$ is bounded. This and Theorem \ref{Lem3} imply (i) and (ii).\\
	(iii) Aiming at a contradiction assume that there exist $\varepsilon\in (0,\infty)$ and a subsequence $(n_{k})_{k\in \N_{0}}\subseteq \N$ such that $\underline{\lim\limits}_{k\to \infty}n_k=\infty$ and
	\begin{equation}\label{nablaF>e}
		\|\nabla F(x_{n_{k}})\|\geq  \varepsilon.
	\end{equation}
	Define $\kappa\in [0, \infty]$ by $\kappa\coloneqq \sup_{n\in \mathbb{N}_{0}}\|\nabla F(x_{n})\|$. Continuity of $\nabla F$ and boundedness of $x$ imply the boundedness of $(\left( \nabla F\right) (x_{n}))_{n\in \N_{0}}$ and thus $\kappa< \infty$. Without loss of generality we assume that $\kappa >0$. Since $\nabla F\big|_{\overline{C}}$ is uniformly continuous, then there exists $\delta\in (0,\infty)$ such that for all $x,y \in C$ with $\|x-y\|\leq \delta$ it holds that
	
	\begin{equation}\label{e/2}
		\|\nabla F(x)-\nabla F(y)\|\leq \frac{\varepsilon}{2}.
	\end{equation}
We assume without loss of generality that $\sup_{k\in \N_{0}}\gamma_{n_k}< \frac{\delta}{\kappa}$.
	The fact that $\sum_{l=0}^{\infty}\gamma_{l}=\infty$ implies that there exist $(m_{k})_{k\in \N_{0}}\subseteq \N$ which satisfy for all $k\in \N_{0}$ that $m_{k}\geq n_{k}$ and it holds that
	\begin{equation}\label{ineq8}
		\sum_{l=n_{k}}^{m_{k}}\gamma_{l}\leq \frac{\delta}{\kappa}< \sum_{l=n_{k}}^{m_{k}+1}\gamma_{l}.
	\end{equation}
	Assume without loss of generality for all $k\in \N_{0}$ that $m_{k}+1<n_{k+1}$. Now for all $k\in \N_{0}$, $l\in \left\lbrace n_{k},\ldots, m_{k}+1 \right\rbrace $ we obtain from \eqref{eq6} and \eqref{ineq8} that
	\begin{equation}
		\|x_{l}-x_{n_{k}}\|= \norm{\sum_{i=n_{k}}^{l-1}(x_{i+1}-x_{i})}\leq  \sum_{i=n_{k}}^{l-1}\gamma_{i}\|\nabla F(x_{i})\|\leq \kappa \sum_{i=n_{k}}^{m_{k}}\gamma_{i}\leq \delta.
	\end{equation}
	This, \eqref{e/2} and \eqref{nablaF>e} imply for all $k\in \N_{0}$,  $l\in \left\lbrace n_{k},\ldots, m_{k}+1 \right\rbrace $ that 
	\begin{equation}\label{*}
		\|\nabla F(x_{l})\|\geq  \|\nabla F(x_{n_{k}})\|-\frac{\varepsilon}{2}\geq  \varepsilon-\frac{\varepsilon}{2}=\frac{\varepsilon}{2}.
	\end{equation}
	Furthermore, this together with \eqref{ineq8} ensures for all $k\in \N_{0}$ that
	\begin{equation}
		\sum_{l=n_{k}}^{m_{k}+1}\gamma_{l}\|\nabla F(x_{l})\|^{2}\geq   \Big(\frac{\varepsilon}{2}\Big)^{2}\sum_{l=n_{k}}^{m_{k}+1}\gamma_{l}\geq \Big(\frac{\varepsilon}{2}\Big)^{2}\frac{\delta}{\kappa}.
	\end{equation}
	Next this implies that
	\begin{equation}
		\sum_{l=0}^{\infty}\gamma_{l}\|\nabla F(x_{l})\|^{2}\geq  \sum_{k=0}^{\infty}\sum_{l=n_{k}}^{m_{k}+1}\gamma_{l}\|\nabla F(x_{l})\|^{2}\geq  \sum_{k=1}^{\infty} \Big(\frac{\varepsilon}{2}\Big)^{2}\frac{\delta}{\kappa}=\infty.
	\end{equation}
	This is a contradiction to (ii). The proof of Theorem \ref{Lem4} is thus completed.
\end{proof}
	\section*{Acknowledgement} 
	I would like to express sincere gratitude to Prof. Dr. Martin Hutzenthaler for his guidance and meticulous review of the paper. I would also like to thank Prof. Dr. Alain Haraux for his assistance in comprehending the theory of gradient flow systems and patiently addressing all questions pertaining to convergence analysis.\\
	This work has been funded by the
	Deutsche Forschungsgemeinschaft (DFG, German Research Foundation) through the research grant number HU1889/7-1.


\begin{thebibliography}{99}
		
		\bibitem{Absil-Mahony}
		P.-A. Absil, R. Mahony and B. Andrews, Convergence of the iterates of descent
		methods for analytic cost functions, SIAM J. Optim., 16 (2005), 531547 (electronic).
		
		\bibitem{attouch-buttazo-michaille}
		H. Attouch, G. Buttazzo, G. Michaille, Variational analysis in Sobolev and BV spaces. 
		Applications to PDE's and optimization, Second Edition, MOS/SIAM Series on Optimization, MO 17, 
		Society for Industrial and Applied Mathematics (SIAM), Philadelphia, PA, (2014), 793 pages.
		
		\bibitem{bach-moulines}
		F. Bach, E. Moulines. Non-strongly-convex smooth stochastic approximation with convergence rate $O(1/n)$. In C. J. C. Burges, L. Bottou, M. Welling, Z. Ghahramani, K. Q. Weinberger, editors, Advances in Neural Information Processing Systems,
		volume 26, pages 773–781. Curran Associates, Inc., 2013. URL: http://papers.nips.cc/paper/4900-non-strongly-convex-smooth-stochastic-approximation-withconvergence-rate-o1n.pdf.
		
		\bibitem{BerTsit}
		D. P. Bertsekas, J. N. Tsitsiklis. Gradient convergence in gradient methods
		with errors. SIAM Journal on Optimization, 10(3):627–642, 2000. doi:10.1137/S10526
		23497331063.
		
		\bibitem{ChJeRiRoss}
		P. Cheridito, A. Jentzen, A. Riekert, F. Rossmannek, A proof of
		convergence for gradient descent in the training of artificial neural networks for constant
		target functions, 2021. arXiv:2102.09924.
		
		
		\bibitem{curry}
		H.B. Curry, The method of steepest descent for non-linear minimization problems. Quart. Appl.
		Math. 2, 258–261 (1944).
		
		\bibitem{dereich-kassing}S. Dereich, S. Kassing, Convergence of Stochastic Gradient Descent Schemes for Lojasiewicz-Landscapes, arXiv:2102.09385.
		
		\bibitem{eberletal}
		S. Eberle, A. Jentzen, A. Riekert, G.S. Weiss, Existence, uniqueness, and convergence rates for gradient flows in
		the training of artificial neural networks with ReLU activation, arXiv:2108.08106.
		\bibitem{GarrigosGower}
		G. Garrigos, R.M. Gower, Handbook of Convergence Theorems
		for (Stochastic) Gradient Methods, arXiv:2301.11235. 
		\bibitem{haraux}
		A. Haraux, M.A. Jendobi, The Convergence Problem for Dissipative Autonomous Systems. Classical Methods and Recent Advances, Springer, 2015.
		
		\bibitem{haraux2}
		A. Haraux, Some applications of the Lojasiewicz gradient inequality, Communications on Pure and Applied Analysis, 11(6) 2012, pp. 2417-2427, doi:10.3934/cpaa.2012.11.2417
		
		\bibitem{jentzen-koerger}
		A. Jentzen, T. Kr\"{o}ger. Convergence rates for gradient descent in the training of overparameterized artificial neural networks with biases, 2021. arXiv:2102.11840.
		
		\bibitem{jentzen-riekert-1}
		A. Jentzen, A. Riekert. Convergence analysis for gradient flows in the training
		of artificial neural networks with ReLU activation, 2021. arXiv:2107.04479.
		
		
		
		\bibitem{jentzen-riekert-2}
		A. Jentzen, A. Riekert, A proof of convergence for the gradient descent optimization method with random initializations in the training of neural networks with ReLU
		activation for piecewise linear target functions, 2021. arXiv:2108.04620
		
		
		
		\bibitem{L-1}
		S. Lojasiewicz, Une propri´et´e topologique des sous-ensembles analytiques r´eels,
		Colloques internationaux du C.N.R.S.: Les ´equations aux d´eriv´ees partielles,
		Paris (1962), Editions du C.N.R.S., Paris, 1963, 87-89.
		
		\bibitem{L-2}
		S. Lojasiewicz, Ensembles semi-analytiques, Preprint, I.H.E.S. Bures-sur-Yvette,
		1965.
		\bibitem{nesterov}
		Y. Nesterov. Introductory lectures on convex optimization, volume 87 of Applied Optimization. Kluwer Academic Publishers, Boston, MA, 2004. A basic course. doi:10.1007/978-1-4419-8853-9.
		
		\bibitem{PalisdeMelo}
		J. Palis, W. de Melo, Geometric Theory of Dynamical Systems, ed. by A.K. Manning. An
		introduction. Translated from the Portuguese Springer, New York, 1982.
		
		
		
		
	\end{thebibliography}
\end{document}